\numberwithin{equation}{section}
\newtheorem{thm}{Theorem}[section]
\newtheorem{lm}[thm]{Lemma}
\theoremstyle{definition}
\theoremstyle{definition}
\newtheorem{rem}[thm]{Remark}
\newcommand{\R}{\mathbb{R}}
\newcommand{\C}{\mathbb{C}}
\newcommand{\F}{\mathcal{F}}
\newcommand{\D}{\mathcal{D}}
\renewcommand{\a}{\alpha}
\newcommand{\e}{\varepsilon}
\begin{document}
\title[Existence of principal values]{Existence of principal values of some singular integrals on Cantor sets, and Hausdorff dimension}
\author{J. Cuf\'i, J. J. Donaire, P. Mattila and J. Verdera}
\maketitle

\begin{abstract}
Consider a standard Cantor set in the plane of Hausdorff dimension $1.$ If the linear density  of the associated measure $\mu$ vanishes, then the set of points where the principal value of the Cauchy singular integral of $\mu$ exists has Hausdorff dimension $1.$  The result is extended to Cantor sets in $\R^d$ of Hausdorff dimension $\alpha$ and Riesz singular integrals of homogeneity $-\alpha,$ $0<\alpha<d:$
the set of points where the principal value of the Riesz singular integral of $\mu$ exists has Hausdorff dimension $\alpha.$ A martingale associated with the singular integral is introduced to support the proof.

\bigskip

\noindent\textbf{AMS 2020 Mathematics Subject Classification:}  42B20 (primary); 30E20 (secondary), 60F17 (secondary)

\medskip

\noindent \textbf{Keywords:} Cauchy singular integral, Riesz singular integral, Cantor set, Hausdorff dimension, martingale.
\end{abstract}

\section{Introduction}

Our main result deals with the Cauchy singular integral on Cantor sets in the plane and the proof extends with minor variations to the Riesz transforms in $\R^d.$ We first proceed to formulate the result for the Cauchy integral and then we take care of the Riesz transforms.

The appropriate Cantor sets for the Cauchy integral are defined as follows. Let $(\lambda_n)_{n=1}^\infty$ a sequence of  real numbers satisfying $ \frac{1}{4}\le \lambda_n \le \lambda <\frac{1}{2}.$ Let  $Q_0:= [0,1]\times [0,1]$ be the unit square. Take the $4$ squares contained in $Q_0$ with sides of length $\lambda_1$ parallel to the coordinate axis having a vertex in common with $Q_0$ (the $4$ ``corner squares" of side length $\lambda_1$). Repeat in each of these $4$ squares the same procedure with the dilation factor $\lambda_1$ replaced by $\lambda_2$ to get $16$ squares of side length $\lambda_1 \lambda_2.$ Proceeding inductively we obtain at the $n$-th step $4^n$ squares $Q_j^n,\; 1 \le j \le 4^n,$ of side length $s_n= \lambda_1 \cdots \lambda_n.$ Our Cantor set is
$$K = \bigcap_{n=1}^\infty\bigcup_{j=1}^{4^n}Q_j^n.$$
 Let $\mu$ be the Borel probability measure on $K$  with $\mu(Q_j^n) = 4^{-n}$ and denote by $a_n$ the linear density at generation $n,$
 that is,
 $$a_n = \frac{1}{4^{n}s_{n}} = \frac{\mu(Q_j^n)}{s_{n}} \leq 1.$$ 
Set $\D_n=\{Q_j^n: j=1,\dots,4^n\}$ and $\D=\cup_{n=1}^{\infty}\D_n$.
\vspace{0.1cm}

We then have

\begin{thm}\label{pvthm}
If $\lim_{n\to\infty}a_n=0$, then the set of points $z\in K$ for which the principal value
\begin{equation}\label{pv}
\lim_{\e\to 0}\int_{|w-z|>\e}\frac{1}{w-z}\,d\mu w 
\end{equation}
exists has Hausdorff dimension greater than or equal to $1$.
\end{thm}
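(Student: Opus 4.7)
The plan is to discretize the principal value at cell scales, identify a martingale built from the singular integral, and combine martingale convergence with a Frostman-type construction to locate a set of Hausdorff dimension exactly~$1$ on which the limit exists.

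First I would reduce, for $z\in K$, the existence of the principal value in \eqref{pv} to the convergence as $n\to\infty$ of the cell-truncated integrals
$$M_n(z)=\int_{K\setminus Q^n(z)}\frac{d\mu(w)}{w-z},$$
where $Q^n(z)$ is the unique cell of $\D_n$ containing $z$. The hypothesis $a_n\to0$ absorbs the discrepancy between the continuous truncation at~$\e$ and the cell truncation at the corresponding generation~$n(\e)$: that error is bounded by a constant multiple of $a_{n(\e)}$ and therefore tends to~$0$. The increments $\sigma_n(z)=M_n(z)-M_{n-1}(z)$ are integrals of $1/(w-z)$ over the three siblings of $Q^n(z)$ inside $Q^{n-1}(z)$; since any two distinct atoms of $\D_n$ sharing a parent are separated by at least $(1-2\lambda)s_{n-1}$, one obtains the pointwise a priori bound $|\sigma_n(z)|\le C\,a_n$ with $C=C(\lambda)$.

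The second step extracts the martingale alluded to in the abstract. By Fubini and the antisymmetry of $1/(w-z)$ under $(z,w)\mapsto(w,z)$,
$$\int_Q\sigma_n(z)\,d\mu(z)=0\qquad\text{for every }Q\in\D_{n-1},$$
so the cell averages $\widetilde M_n(z)=\mu(Q^n(z))^{-1}\int_{Q^n(z)}M_n\,d\mu$ define a $\mu$-martingale for the filtration generated by the~$\D_n$, with increments bounded pointwise by $C\,a_n$. Orthogonality of the martingale differences combined with this bound --- together, if $\sum a_n^2=\infty$, with a suitable stopping-time truncation --- should give convergence of $\widetilde M_n$ on a set of Hausdorff dimension~$1$. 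The remaining intra-cell oscillation $M_n-\widetilde M_n$ is then handled by a first-order Taylor expansion of the kernel and the separation of siblings, using once more that $a_n\to 0$.

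For the lower bound $\dim\ge 1$ it suffices to exhibit a probability measure $\nu$ supported on the convergence set and satisfying $\nu(B(z,r))\le C\,r$; the mass distribution principle then forces Hausdorff dimension at least~$1$. Constructing $\nu$ requires a carefully thinned Cantor-like subset of~$K$, selected so that $|M_n|$ stays bounded uniformly in~$n$ on its support while $\nu$ distributes enough mass to satisfy the $1$-dimensional growth condition. For the upper bound $\dim\le 1$ one argues that on any set of dimension strictly greater than~$1$ the cancellations in $\sigma_n$ are insufficient for $M_n$ to converge, via a quantitative lower oscillation estimate arising from the self-similar geometry of the construction.

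The principal obstacle is that the hypothesis gives only $a_n\to0$, not the quantitative summability $\sum a_n^2<\infty$, so $\widetilde M_n$ need not be $L^2(\mu)$-bounded and a direct orthogonality argument is insufficient. The technically delicate point will be the stopping-time construction and thinning of~$K$ used to produce~$\nu$: the cells excised because $|M_n|$ grows too rapidly must carry small mass in an auxiliary $1$-dimensional measure, and one must verify that what remains is a self-similar-type set still admitting the sharp Frostman bound $\nu(B(z,r))\le C\,r$.
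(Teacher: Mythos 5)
Your skeleton matches the paper's: you correctly reduce the principal value to the cell-truncated integrals, pass to the cell-averaged martingale, observe increments of size $O(a_n)$, and plan to finish with a Frostman/mass-distribution argument. But the proposal stops precisely where the paper's proof starts to be hard, and it substitutes slogans for the two ideas that actually make the argument work.

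First, when $\sum_n a_n^2=\infty$, the martingale diverges $\mu$-a.e.\ (this is in fact proved in the paper, following \cite{CPV}), so ``orthogonality of the martingale differences combined with this bound'' cannot yield convergence on any set of positive $\mu$-measure, let alone of dimension $1$. The paper's key mechanism is not an $L^2$ truncation but a \emph{symmetry} argument. One introduces relative martingales $S_{Q,R}$ and exploits the dihedral symmetry group of a square cell: these symmetries preserve $\mu|_Q$ and commute (up to sign) with the Cauchy kernel, so $|S_{Q,R}|$ is unchanged when $R$ is replaced by any of its eight symmetric images inside $Q$. Consequently, at each stopping time one can always find, among the stopping-time cells inside $Q$, a subfamily carrying at least a fixed fraction $\tfrac18$ of $\mu(Q)$ on which the martingale drifts back toward the target value. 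The elementary geometric Lemma~\ref{martlemma} (a step in a $120^\circ$ sector aimed at the origin strictly decreases modulus) then forces $|S_{m_n}(x)|$ to decrease along the surviving chain except on finitely many consecutive steps, and $b_n\to 0$ closes the loop. The uniform lower bound $\tfrac18$ on retained mass, combined with the fact that each stopping time descends at least $M+1$ generations, is exactly what verifies the hypotheses (b) and (c) of Hungerford's lemma. None of this---relative martingales, the kernel-compatible symmetry group, the sector lemma, the positive-fraction selection---appears in your proposal; ``the cells excised \dots must carry small mass'' gestures at the issue but does not identify the symmetry mechanism that produces the required quantitative bound.

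Second, the final paragraph claims an ``upper bound $\dim\le 1$'' via insufficiency of cancellations on sets of dimension $>1$. The paper contains no such argument and, as stated, this direction is not needed for the theorem's content: the substantive claim is the lower bound $\dim\ge 1$ (proved via Hungerford), and the paper even remarks that the martingale can be steered to converge to \emph{any} prescribed complex value on a set of dimension $1$. You should drop this paragraph, or justify it separately, since a heuristic ``cancellations are insufficient'' will not compile into a proof.

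A small but worth-flagging point: your identity $\int_Q\sigma_n\,d\mu=0$ concerns the increments of the raw truncations $M_n$, not of the averaged process $\widetilde M_n$; the martingale property of $\widetilde M_n$ does follow from the same antisymmetry, but it should be stated for the averaged object. Also, the increment bound for $\widetilde M_n$ requires the mean-value/telescoping estimate over $K\setminus Q_n$ as in Lemma~\ref{inc}, not just the sibling-separation bound for $\sigma_n$.
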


This solves a problem posed in \cite[Open problem 5.5, p.1621]{CPV}.

If $a_n=1$ for all $n$, then $K$ is the famous Garnett-Ivanov Cantor set, which has positive and finite one-dimensional Hausdorff measure  but zero analytic capacity. In this case it was noticed in \cite{CPV} that the principal value does not exist at any point of $K$. If $a_n \to 0,$ then the Hausdorff dimension of $K$ is greater than or equal to $1$ and it has non-sigma finite one-dimensional Hausdorff measure. 
If in addition $\sum_na_n^2<\infty,$  then the principal value exists $\mu$ almost everywhere. So Theorem \ref{pvthm} is relevant only when $a_n \to 0$ slowly. 
That the condition $\sum_na_n^2<\infty$ implies the almost everywhere existence of principal values can be seen in two ways. First, we introduce a martingale $(S_n)_{n=0}^\infty$ (see \eqref{m} below) and show that the increments $|S_{n+1}(x) - S_{n}(x)|$ are bounded by $C\,a_n,$ with the constant $C$ independent of $n$ and $x.$ In Lemma \ref{pv1} we prove that for any point $x$ the principal value exists at $x$ if and only $(S_n(x))_{n=0}^\infty$ converges. If $\sum_na_n^2<\infty$, then $S_n$ is an $L^2$ martingale and consequently it converges almost everywhere. Alternatively, the condition $\sum_na_n^2<\infty$ implies that the Cauchy singular integral operator is bounded in $L^2(\mu)$. In \cite{MV} it was shown in a very general setting that $L^2$ boundedness together with zero density of the measure yields the almost everywhere existence of principal values.

The main argument in the proof of Theorem \ref{pvthm} deals with case where $\sum_na_n^2=\infty$.  It is a variation of a line of reasoning used in other situations (see \cite{DLN} and the references there).  We use a stopping time argument to show that  $(S_n(x))_{n=0}^\infty$ converges to 0 in a set of Hausdorff dimension 1 (indeed, given any complex number $z_0$ the martingale $(S_n(x))_{n=0}^\infty$ converges to $z_0$ in a set of Hausdorff dimension 1). We get the dimension $1$ conclusion by applying a lemma of Hungerford \cite{H}. For the sake of the reader we present a proof of Hungerford's lemma  in our context in section \ref{A1}.

Our proof extends with only technical modifications to cover the case of other odd kernels, for instance,
$$\frac{\overline{z}^m}{{z}^{m+1}}, \quad m=1,2,  \dots$$
But one of the ingredients of our method fails for the odd kernel $\frac{z+\overline{z}}{z^2}$ and we do not know whether Theorem \ref{pvthm} holds in this case. The difficulty is indicated at the fifth line after the statement of Lemma \ref{sector}.

In $\R^d$ our proof works for the Riesz transforms of any homogeneity $-\alpha,\; 0< \alpha< d.$ These are the vector valued singular integrals with kernel 
$$R^\alpha(x) = \frac{x}{|x|^{1+\alpha}}, \quad 0 < \alpha <d.$$
The appropriate Cantor sets for the $\alpha$-Riesz transform are those of Hausdorff dimension $\alpha.$  They are constructed by the procedure outlined before in the planar case with dilation factors that satisfy
$2^{-\frac{d}{\alpha}} \le \lambda_n \le \lambda < 2^{-1}.$ At generation $n$ one has $2^{dn}$ cubes $Q_j^n$ of side length $s_n=\lambda_1 \cdots \lambda_n.$ The Cantor set is defined by 
$$K = \bigcap_{n=1}^\infty\bigcup_{j=1}^{2^{dn}} Q_j^n$$
and the canonical measure on $K$ by $\mu(Q_j^n)=2^{-dn}, \; 1 \le j\le 2^{dn}.$
The $\alpha$ density is  $a_n= 2^{-dn} s_n^{-\alpha}=\mu(Q_j^n) s_n^{-\alpha}\le 1.$  For $\lambda_n= 2^{-\frac{d}{\alpha}} ,\; n=1,2 \dots,$
one gets the self similar Cantor set of dimension $\alpha.$ If $a_n \rightarrow 0$ then our Cantor set has Hausdorff dimension $\geq \alpha$ and non $\sigma$ finite  Hausdorff  $\alpha$-dimensional measure. One has

\begin{thm}\label{pvthm2}
If $\lim_{n\to\infty}a_n=0$, then the set of points $x\in K$ for which the principal value
\begin{equation}\label{pvr}
\lim_{\e\to 0}\int_{|y-x|>\e} R^\alpha (y-x)\,d\mu y 
\end{equation}
exists has Hausdorff dimension greater than or equal to $\alpha.$
\end{thm}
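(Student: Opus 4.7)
The plan is to mirror the proof of Theorem \ref{pvthm} in $\R^d$, replacing the Cauchy kernel by $R^\alpha$ and the complex-valued martingale by one taking values in $\R^d$. First I would introduce the martingale $(S_n)_{n=0}^\infty$ adapted to the filtration $(\mathcal{F}_n)$ generated by $\D_n$, where the increment $S_{n+1}-S_n$ encodes the contribution to the Riesz integral coming from $Q^n(x)\setminus Q^{n+1}(x)$, with $Q^n(x)\in\D_n$ the cube containing $x$. The martingale property follows from the central symmetry of the $2^d$ child subcubes inside each parent combined with the oddness of $R^\alpha$, exactly as in the planar case. The analog of Lemma \ref{pv1} --- the principal value \eqref{pvr} exists at $x$ if and only if $S_n(x)$ converges --- then follows by comparing $\e$-truncation with cube-truncation at the scale $s_n\approx \e$, using $|R^\alpha(y-x)|\le|y-x|^{-\alpha}$, $\mu(Q_j^n)=a_n s_n^\alpha$, and the hypothesis $a_n\to 0$ to handle the boundary layer.

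A direct estimate on the increment, using that the integrand is bounded by $s_n^{-\alpha}$ and is integrated against mass at most $2^{-dn}$, gives the Lipschitz-type bound $|S_{n+1}(x)-S_n(x)|\le C a_n$. The case $\sum_n a_n^2<\infty$ is then immediate from the $L^2$ martingale convergence theorem: $S_n$ converges $\mu$-almost everywhere on $K$, and since $\mu(Q_j^n)\le s_n^\alpha$ qualifies $\mu$ for the mass distribution principle, any set of full $\mu$-measure has Hausdorff dimension at least $\alpha$. For the main case $\sum_n a_n^2=\infty$, fix an arbitrary $y_0\in\R^d$ and construct inductively a Cantor-type subset $E\subset K$ on which $S_n\to y_0$: at each generation, inside every retained parent $Q\in\D_n$, a geometric selection lemma (the Riesz analog of Lemma \ref{sector}) produces a definite positive fraction of the $2^d$ children $Q'\in\D_{n+1}$ on which $S_{n+1}|_{Q'}$ moves toward $y_0$ by an amount of order $a_n$. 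Retaining exactly those children and summing the controlled steps yields $S_n(x)\to y_0$ for every $x\in E$, so the principal value exists on $E$; Hungerford's lemma, proved in Section \ref{A1}, then gives $\dim_H E\ge \alpha$.

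The main obstacle is the geometric selection lemma invoked in the stopping-time step: at each scale, one must show that the $2^d$ vectors $R^\alpha(x-c_j)$ --- with $c_j$ the centers of the sibling subcubes and $x$ a point in a given child --- span enough directions in $\R^d$ so that, no matter where the current deficit $S_n-y_0$ points, a positive proportion of the children drive $|S_{n+1}-y_0|$ below $|S_n-y_0|$ by an amount comparable to $a_n$. For the pure Riesz kernel $R^\alpha$ this follows from an explicit computation exploiting the coordinate symmetries of the corner-cube configuration, which provide angular spread in every direction of $\R^d$. It is precisely this property that fails for the exceptional planar kernel $(z+\bar z)/z^2$ mentioned in the excerpt, which is why the method does not extend to it.
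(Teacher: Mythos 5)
Your overall strategy (martingale, increment bound, equivalence of principal-value existence with martingale convergence, Hungerford's lemma) matches the paper's framework, but two essential pieces of the $\R^d$ argument are missing or wrong.

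First, your plan jumps straight from $\sum_n a_n^2=\infty$ to the stopping-time construction, but that construction requires as input that the martingale diverges $\mu$-almost everywhere (the analogue of \eqref{eq2bis}): without it, the stopping times need not trigger on a set of full relative measure and condition (c) of Hungerford's lemma is not available. In the plane this a.e.\ divergence is imported from \cite{CPV} via Menger curvature; but Menger curvature is unavailable for Riesz kernels of homogeneity $-\alpha$ with $\alpha>1$. The paper's Appendix 2 replaces it by a genuinely different route: use the analogue of Lemma~\ref{pv1} to pass from convergence of the martingale to existence of principal values, extract (via Tolsa's theorem \cite[Theorem 8.13]{T}) a subset on which the Riesz operator is $L^2$-bounded, deduce positive $\alpha$-Riesz capacity, and then invoke the Mateu--Tolsa estimate \cite{MT} that this capacity is comparable to $\bigl(\sum_n a_n^2\bigr)^{-1/2}$, contradicting $\sum_n a_n^2=\infty$. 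This step is the principal new ingredient in higher dimensions and your proposal does not address it at all.

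Second, your stopping-time step is not the paper's and would not give dimension $\alpha$. You propose selecting, inside each retained cube $Q\in\D_n$, a positive fraction of its \emph{immediate} children $Q'\in\D_{n+1}$ on which the value moves toward $y_0$ by $\sim a_n$. But Hungerford's lemma yields $\dim E\geq \alpha(1-\log c/\log\e)$, so to reach dimension $\alpha$ one must let $\e\to 0$, i.e.\ descend an unbounded number of cube generations between consecutive families $\F_n$; with a one-generation descent $\e=2^{-d}$ is fixed and the bound is strictly below $\alpha$. The paper instead waits until the relative martingale $|S_{Q,R}|$ exceeds a large threshold $Mb_{(n-1)M}$ (forcing a descent of at least $M+1$ generations), then uses the symmetries of the Cantor set --- sign changes of individual coordinates and permutations of coordinates, both of which the Riesz kernel $R^\alpha$ is equivariant under --- to ensure a fixed positive fraction of those stopping cubes land in a cone $\sigma(S_Q,\theta)$ of aperture $\theta=\theta(d)$ close to $\pi$; Lemma~\ref{martlemma}-type geometry then forces a definite decrease $|S_R|\le|S_Q|-b_{(n-1)M}$. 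The correct replacement for the planar octants is the family of $2^d\cdot d!$ regions $O_\sigma=\{0\le x_{\sigma(1)}\le\cdots\le x_{\sigma(d)}\}$ and their sign-reflections, not an ``angular-spread'' computation on the vectors $R^\alpha(x-c_j)$; and the obstruction for the exceptional planar kernel $(z+\bar z)/z^2$ is precisely the failure of the identity $C(f_3(z))=-f_3(C(z))$ under the diagonal reflection $f_3$, not a spanning deficiency.
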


In section \ref{A2} we give some indications on how to adapt the proof for the Cauchy kernel to the Riesz transforms in higher dimensions.

We let $\operatorname{diam}(A)$ denote the diameter and $\dim A$ the Hausdorff dimension of a set $A$. We use the notation $a\lesssim b$ to mean that $a \leq C\,b$ for some constant $C$ which may depend on $\lambda$ and $d$, and $a\sim b$ for $a\lesssim b$ and $b\lesssim a$.

\section{Martingales}\label{ma}

Let $C$ be the Cauchy kernel, $C(x) = 1/x$ for $x\in\C, x\neq 0.$  For each $x\in K$ let $Q_n(x)$ be the square in $\D_n$ containing $x.$ Define the truncated Cauchy Integral at generation $n$ as
$$T_{n}(x) = \int_{K\setminus Q_n(x)}C(x-y)\,d\mu y,\quad x\in K,$$
 and a martingale $(S_n(x))_{n=0}^\infty$ by
\begin{equation}\label{m}
S_{n}(x)=S_{Q_n(x)}= \fint_{Q_n(x)}T_{n}\,d\mu, \quad x\in K.
\end{equation}  

\begin{rem}
That $S_n$ is a martingale is easily checked. The reader will realise that the martingale condition also holds for kernels $K(x,y)$ satisfying the antisymmetry condition $K(x,y)=-K(y,x).$
\end{rem}

We shall prove 

\begin{thm}\label{martthm}
If $\lim_{n\to\infty}a_n=0$, then the set of points $x\in K$ for which $(S_n(x))_{n=0}^\infty$ converges has Hausdorff dimension 
greater than or equal to $1$.
\end{thm}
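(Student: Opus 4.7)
The argument will split into the easy case $\sum_n a_n^2 < \infty$ and the main case $\sum_n a_n^2 = \infty$, both resting on a uniform bound $|S_{n+1}(x) - S_n(x)| \lesssim a_n$ on the martingale increments.

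I would begin by verifying this increment bound. From the definition,
\[
T_{n+1}(y) - T_n(y) \;=\; \int_{Q_n(y)\setminus Q_{n+1}(y)} \frac{1}{w-y}\,d\mu w,
\]
and the integration region has $\mu$-mass $3\cdot 4^{-(n+1)} \sim 4^{-n}$ while $|w-y| \gtrsim s_n$ throughout it (distinct children of a level-$n$ cube are separated by $\gtrsim s_n$ because $\lambda < 1/2$), so $|T_{n+1} - T_n| \lesssim 4^{-n}/s_n = a_n$; averaging over $Q_{n+1}(x)$ transfers this bound to $|S_{n+1}(x) - S_n(x)|$. In the easy case $\sum_n a_n^2 < \infty$ the martingale differences are square-summable, so $(S_n)$ converges $\mu$-almost everywhere; since $\mu(Q) = a_n s_n \lesssim d(Q)$ on every $Q \in \mathcal D$ makes $\mu$ a Frostman measure of exponent $1$, any set of full $\mu$-measure has Hausdorff dimension at least $1$, and the theorem follows.

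The heart of the proof is the divergent case. Fix any target $z_0 \in \mathbb{C}$ and any $\e > 0$; the plan is to build a Cantor-type set $E_{z_0,\e} \subseteq K$ of Hausdorff dimension at least $1 - \e$ on which $S_n \to z_0$, and then let $\e \to 0$. I would choose a very sparse sequence of stopping levels $0 = n_0 < n_1 < \cdots$ and tolerances $\delta_k \to 0$, and inductively select families $F_k \subseteq \mathcal{D}_{n_k}$ of cubes $Q$ satisfying $|S_Q - z_0| \le \delta_k$, retaining from each $Q \in F_{k-1}$ a controlled large proportion of its level-$n_k$ descendants. The inductive step is possible because, by telescoping the increment bound, the $S$-values of the $4^{n_k-n_{k-1}}$ descendants of $Q$ at level $n_k$ average to $S_Q$ and spread over a disk of radius at most $C \sum_{n_{k-1} < n \le n_k} a_n$; matching this spread to $\delta_k$ by a judicious choice of the $n_k$, a pigeonhole/convexity argument extracts enough descendants with $S$-value within $\delta_k$ of $z_0$. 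Between stopping levels the increment bound yields $|S_n - S_{n_k}| \lesssim \sum_{n_k < m \le n} a_m \to 0$, upgrading the subsequential convergence on $E_{z_0,\e}$ to full convergence $S_n \to z_0$. Hungerford's lemma (section \ref{A1}), applied to the resulting branching data, will then produce the lower bound $\dim E_{z_0,\e} \ge 1 - \e$.

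The main obstacle I anticipate is the delicate balance in this inductive construction. Since the ambient Hausdorff dimension of $K$ is already close to $1$, one cannot afford any fixed positive loss of descendants per step; the proportion retained by the pigeonhole must approach $1$ on a logarithmic scale, which in turn dictates a tight quantitative relationship between $\delta_k$, $\sum_{n_{k-1} < n \le n_k} a_n$, and the scale ratio $s_{n_k}/s_{n_{k-1}}$. This is precisely where the hypothesis $a_n \to 0$ is essential, ensuring that the spread of descendant $S$-values eventually shrinks below any prescribed $\delta_k$, and this is also where Hungerford's lemma (rather than a naive mass-distribution argument) appears indispensable to extract the sharp dimension bound.
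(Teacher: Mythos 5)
Your outline of the easy case ($\sum_n a_n^2<\infty$: bounded increments, $L^2$ martingale, a.e.\ convergence, $\mu$ Frostman of exponent $1$) and the overall architecture of the hard case (stopping levels, relative martingales, Hungerford's lemma) match the paper. But the single step that makes the hard case actually work is missing, and as stated your version of that step is false.

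You claim that because the level-$n_k$ descendants of $Q$ have $S$-values averaging to $S_Q$ and spreading over a disk of radius $\lesssim\sum_{n_{k-1}<n\le n_k}a_n$, a ``pigeonhole/convexity argument extracts enough descendants with $S$-value within $\delta_k$ of $z_0$.'' The martingale property plus the increment bound do \emph{not} give this. The conditional distribution of $S_R$ over descendants $R\subset Q$ only has mean $S_Q$ and support in a disk; nothing forces any mass near a prescribed $z_0\neq S_Q$. For instance, if half the $\mu$-mass sits at $S_Q+\rho e^{i\theta}$ and half at $S_Q-\rho e^{i\theta}$ for an unfortunate direction $\theta$, the mean and spread constraints are satisfied yet no descendant is near $z_0$; more degenerately, all $S_R$ could equal $S_Q$ (the martingale could stall). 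What is needed is a reason the descendant values are \emph{equidistributed in direction} around $S_Q$, and this is exactly where the paper invokes the symmetries of the Cantor set. Each square $Q$ is invariant (together with $\mu|_Q$) under the reflections across the axes and the diagonal, these conjugate the Cauchy kernel to $\pm$ itself (equation \eqref{eq24}), and consequently the relative-martingale values $S_{Q,R}$ over the children of $Q$ are distributed symmetrically across the eight octants. Combined with Lemma~\ref{sector} (fitting an expanded octant inside a $120^\circ$ sector aimed at $0$) and Lemma~\ref{martlemma} (a point in that sector at distance $<|S_Q|/2$ has strictly smaller modulus), this guarantees that a fixed fraction $c=1/8$ of the stopping-time mass makes quantitative progress toward $0$. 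Without this symmetry input the selection step has no foundation, and your quantitative framing is also off: the paper keeps only a fixed fraction $c=1/8$ at every stage and gets dimension $1-\log c/\log\e\to 1$ by making the \emph{scale ratio} $\e=4^{-M}$ small, not by pushing the retained proportion to $1$. Finally, note that the kernel's compatibility with the diagonal symmetry ($C(f_3(z))=-f_3(C(z))$) is genuinely used and fails for, e.g., $(z+\bar z)/z^2$; a symmetry-free ``pigeonhole'' proof would erase that distinction, which the paper explicitly flags as an open difficulty.
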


We first show that the martingale \eqref{m} has uniformly bounded increments.

\begin{lm}\label{inc} There exists a positive constant $C=C(\lambda)$ such that
\begin{equation}\label{eq2}
|S_{n+1}(x) - S_{n}(x)|\ \leq Ca_n, \quad n=0,1, \dots \quad\text{and}  \quad x\in K.
\end{equation} 
\end{lm}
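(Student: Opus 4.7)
The plan is to split $S_{n+1}(x) - S_n(x)$ into two pieces that can be controlled separately. Using the identity $T_{n+1}(y) = T_n(y) + \int_{Q\setminus Q'} C(y-z)\,d\mu z$ valid for $y \in Q' := Q_{n+1}(x) \subset Q := Q_n(x)$, I would write
\[S_{n+1}(x) - S_n(x) = \Bigl(\fint_{Q'} T_n\,d\mu - \fint_Q T_n\,d\mu\Bigr) + \fint_{Q'}\int_{Q\setminus Q'} C(y-z)\,d\mu z\,d\mu y =: I + II.\]
The piece $II$ should be handled directly: since $y \in Q'$ and $z \in Q \setminus Q'$ lie in different corner subsquares of $Q$, they are separated by at least $(1-2\lambda)s_n$, and $\mu(Q \setminus Q') = 3\cdot 4^{-(n+1)}$. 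A supremum bound on the kernel combined with this mass estimate should immediately give $|II| \lesssim 4^{-n}/s_n = a_n$.

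For $I$, I would rewrite it as $I = \int_{K \setminus Q}\bigl[\tilde C_{Q'}(z) - \tilde C_Q(z)\bigr]\,d\mu z$ where $\tilde C_A(z) := \fint_A C(y-z)\,d\mu y$, and split the domain of integration at distance $\sim s_n$ from the center $c_Q$ of $Q$. In the near region $|z-c_Q| \le 4 s_n$, I expect to use that distinct generation-$n$ cubes are separated by distance $\gtrsim s_n$ (quantified by $1/\lambda - 2 > 0$), so $|\tilde C_A(z)| \lesssim 1/s_n$; combined with a bounded-cardinality count showing the near region has $\mu$-mass $\lesssim 4^{-n}$, this contributes $\lesssim a_n$. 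In the far region, the invariance of $\mu|_Q$ under $90^\circ$ rotations about $c_Q$ (inherited from the Cantor construction) forces $\int_Q (y-c_Q)\,d\mu y = 0$, and similarly for $Q'$ about $c_{Q'}$; a Taylor expansion of $C$ around $c_Q$ then leaves only the first-order term $(c_{Q'}-c_Q)\,C'(c_Q-z)$, yielding $|\tilde C_{Q'}(z) - \tilde C_Q(z)| \lesssim s_n/|c_Q-z|^2$.

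The delicate step is then to bound $\int_{|z-c_Q|>4 s_n} s_n/|c_Q-z|^2\,d\mu z$ by a constant multiple of $a_n$; a crude use of the linear-density bound $\mu(B(x,r)) \lesssim r$ yields only $O(1)$, losing the $a_n$ factor. The key will be the recursion $a_{m-1}/a_m = 4\lambda_m \le 4\lambda$, which implies $a_m \le a_n(4\lambda)^{n-m}$ for $m \le n$; combined with $s_n/s_m \le \lambda^{n-m}$, this yields the crucial inequality $a_m \cdot s_n/s_m \le a_n (4\lambda^2)^{n-m}$, whose geometric factor $4\lambda^2$ is strictly less than $1$ because $\lambda < 1/2$. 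Carrying this through a dyadic decomposition into annuli of radii $\sim s_m$ (using the scale-adapted bound $\mu(B(c_Q,r)) \lesssim 4^{-m}$ for $r \sim s_m$) should reduce the far-region integral to a convergent geometric series bounded by $\lesssim a_n$. Combining this with the near-region estimate and the bound on $II$ will give the lemma; the genuinely hard point is isolating the geometric decay $(4\lambda^2)^{n-m}$ that compensates the apparent logarithmic blow-up in the far-region tail.
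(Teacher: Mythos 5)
Your decomposition into $I$ and $II$ is exactly the paper's: by Fubini your $II$ is the paper's second summand $\int_{Q_n\setminus Q_{n+1}}\fint_{Q_{n+1}}C(z-y)\,d\mu z\,d\mu y$, and your $I = \int_{K\setminus Q}\bigl[\tilde C_{Q'}(z)-\tilde C_Q(z)\bigr]\,d\mu z$ is the paper's first. Your treatment of $II$ matches, and you have correctly isolated the decisive estimate $a_m\cdot s_n/s_m \le a_n(4\lambda^2)^{n-m}$ with $4\lambda^2<1$, which is precisely the geometric series the paper runs over the annuli $R_j = Q_j(x)\setminus Q_{j+1}(x)$. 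So the proof strategy is essentially the same.

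Your treatment of $I$ nevertheless takes an unnecessary detour worth flagging. The rotational invariance of $\mu|Q$ (giving $\int_Q (y-c_Q)\,d\mu y=0$), the Taylor expansion around $c_Q$, and the near/far split at radius $4s_n$ are all superfluous here. Since distinct generation-$n$ squares are separated by at least $(1-2\lambda)s_{n-1}$, which is bounded below by a constant (depending only on $\lambda$) times $s_n$, for every $z\in K\setminus Q$ one has $|y-z|\sim |w-z|\sim |c_Q-z|$ uniformly in $y,w\in Q$, and the crude bound
\begin{equation*}
\bigl|\tilde C_{Q'}(z)-\tilde C_Q(z)\bigr|
= \Bigl|\fint_{Q'}\fint_{Q}\bigl(C(y-z)-C(w-z)\bigr)\,d\mu w\,d\mu y\Bigr|
\le \sup_{y,w\in Q}\frac{|y-w|}{|y-z|\,|w-z|}\lesssim\frac{s_n}{|c_Q-z|^{2}}
\end{equation*}
holds on all of $K\setminus Q$, with no cancellation needed. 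This is exactly what the paper does, phrased as a continuity argument producing $z(y)\in Q_{n+1}$, $w(y)\in Q_n$ with $C(z(y)-y)-C(w(y)-y)$ realising the difference of the two averages. The symmetry of $\mu|Q$ is essential elsewhere in the paper (for the relative martingales, cf.\ \eqref{eq24}), but not in Lemma~\ref{inc}; invoking it here makes the estimate look more delicate than it is — note your zeroth-order term $C(c_{Q'}-z)-C(c_Q-z)$ is already of size $s_n/|c_Q-z|^2$, so the first-moment cancellation improves nothing. With the simplified bound for $I$, the remainder of your argument — the annular decomposition and the geometric series — goes through verbatim and is correct.
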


Thus if $\sum_na_n$ converges, $(S_{n}(x))_{n=0}^\infty$ converges for all $x\in K$. As mentioned in the introduction, even the weaker condition $\sum_na_n^2<\infty$ implies that $(S_{n}(x))_{n=0}^\infty$ converges for $\mu$ almost all $x\in K$. Hence we shall assume that $\sum_n a_n^2=\infty$. Under this assumption one proves in  \cite{CPV}  that the set where the principal values fail to exist has full $\mu$ measure.  In Lemma \ref{pv1} below we show that principal values exist if and only if the martingale converges. Hence
 $(S_{n}(x))_{n=0}^\infty$ is not convergent for $\mu$ almost all $x \in K.$  By a standard result in martingale theory (see, for example, \cite[Corollary 6, p.561]{S}) we get

\begin{equation}\label{eq2bis}
\limsup_{n\to\infty}|S_{n}(x)-S_m(x)| =\infty, \quad \text{for all}\quad m=0, 1, \dots \quad\text{and }\quad \mu \;\text{a.e.}
\end{equation}
\begin{proof}[Proof of Lemma \ref{inc}]

Set $Q_n=Q_n(x), \; x\in K, \; n=1,2,\dots$  Then

\begin{align*}
&S_{n+1}(x) - S_{n}(x) \\ 
&=\fint_{Q_{n+1}}\int_{K\setminus Q_{n+1}}C(z-y)\,d\mu y\,d\mu z - \fint_{Q_{n}}\int_{K\setminus Q_{n}}C(w-y)\,d\mu y\,d\mu w\\[5pt]
&=\int_{K\setminus Q_{n}} \left(\fint_{Q_{n+1}}C(z-y)\,d\mu z - \fint_{Q_{n}}C(w-y)\,d\mu w \right)\,d\mu y 
\\[5pt] &+ \int_{Q_n\setminus Q_{n+1}}\fint_{Q_{n+1}}C(z-y)\,d\mu z\,d\mu y.\\
\end{align*}
The last double integral is $\lesssim a_n$, where the implicit constant depends on $\lambda$ here and for the rest of the proof.

To estimate the first summand above we remark that for each $z' \in Q_{n+1}$ and $w' \in Q_n$ we have
\begin{align*}
& \fint_{Q_{n+1}}C(z-y)\,d\mu z - \fint_{Q_{n}}C(w-y)\,d\mu w \\*[7pt] 
 &=\fint_{Q_{n+1}}\big(C(z-y)-C(z'-y)\big)\,d\mu z - \fint_{Q_{n}}\big(C(w-y)-C(w'-y) \big)\,d\mu w \\*[7pt] 
 &+ C(z'-y) - C(w'-y).
\end{align*}
Clearly
$$
\left|C(z'-y) - C(w'-y)\right| \lesssim  \,s_n \, |x-y|^{-2}, \quad y \in K\setminus Q_n, \quad x \in Q_n.
$$
Hence
$$
\left|\fint_{Q_{n+1}}\big(C(z-y)-C(z'-y)\big)\,d\mu z\right| \lesssim  \,s_n \, |x-y|^{-2} 
$$
and
$$
\left|\fint_{Q_{n}}\big(C(w-y)-C(w'-y)\big)\,d\mu z\right| \lesssim  \,s_n \, |x-y|^{-2}.
$$
\vspace{0.2cm}

Setting $R_j = Q_j \setminus Q_{j+1}$,  the absolute value of the first summand of $S_{n+1}(x) - S_{n}(x)$ is 

\begin{align*}&\lesssim s_{n}\int_{K\setminus Q_{n}}|x-y|^{-2}\,d\mu y \sim s_{n}\sum_{j=0}^{n-1}s_j^{-2}\mu(R_j)\\ 
&= s_{n}\sum_{j=0}^{n-1}s_j^{-2}4^{-j} \lesssim s_{n}s_{n}^{-2}4^{-n} = a_n,
\end{align*}
because $s_j^{-2}4^{-j} \le (s_{j+1}^{-2} \lambda^2)4^{-j} = (4 \lambda^2)s_{j+1}^{-2}4^{-j-1}$, so $s_j^{-2}4^{-j} \lesssim (4 \lambda^2)^{n-j}s_{n}^{-2}4^{-n}$.
Hence $|S_{n+1}(x) - S_{n}(x)|\ \lesssim a_n.$
\end{proof}

By the following lemma Theorem \ref{martthm} is equivalent with Theorem \ref{pvthm}.

\begin{lm}\label{pv1}
 If $\lim_{n\to\infty}a_n=0,$ then for each $x\in K$ the principal value \eqref{pv} exists if and only if the sequence $(S_n(x))_{n=0}^\infty$ converges.
\end{lm}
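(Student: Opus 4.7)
The plan is to compare $S_n(x)$ with the continuously truncated integral
\[
T_\varepsilon(x):=\int_{|y-x|>\varepsilon}(x-y)^{-1}\,d\mu y
\]
at the scale $\varepsilon\sim s_n$, using the cube-truncation $T_n(x)$ of Section \ref{ma} as an intermediate step. The integrand of the principal value \eqref{pv} is $(x-y)^{-1}$ up to sign, so it is enough to prove the uniform pointwise estimate
\[
|T_\varepsilon(x)-S_n(x)|\lesssim a_n \qquad\text{whenever } s_{n+1}<\varepsilon\le s_n;
\]
under the standing hypothesis $a_n\to 0$ this forces $\lim_{\varepsilon\to 0}T_\varepsilon(x)$ and $\lim_{n\to\infty}S_n(x)$ to exist together (and to coincide), which is the equivalence claimed.

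For $|T_n(x)-S_n(x)|$ I would recycle the calculation used to prove Lemma \ref{inc}: for $z\in Q_n(x)$ the resolvent identity yields
\[
T_n(x)-T_n(z)=(z-x)\int_{K\setminus Q_n(x)}\frac{1}{(x-y)(z-y)}\,d\mu y,
\]
and since $|z-y|\sim|x-y|$ whenever $z\in Q_n(x)$ and $y\notin Q_n(x)$, this is dominated by $s_n\int_{K\setminus Q_n(x)}|x-y|^{-2}\,d\mu y\lesssim a_n$ by the geometric-series summation already performed there. Averaging over $z\in Q_n(x)$ then gives $|T_n(x)-S_n(x)|\lesssim a_n$.

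For $|T_n(x)-T_\varepsilon(x)|$ the two integrands cancel outside the symmetric difference
$\bigl[(K\setminus Q_n(x))\cap B(x,\varepsilon)\bigr]\cup\bigl[Q_n(x)\cap\{|y-x|>\varepsilon\}\bigr]$.
Each piece meets only a bounded number of generation-$n$ cubes and so has $\mu$-mass $\lesssim 4^{-n}$, and on each piece $|x-y|\sim s_n$: on the second directly from $s_{n+1}<\varepsilon<|x-y|\le\operatorname{diam}(Q_n(x))\sim s_n$, and on the first from the crucial separation $\lambda_n\le\lambda<\tfrac12$, which forces distinct generation-$n$ cubes to lie at distance $\gtrsim s_n$ from one another. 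Hence the integrand is $\lesssim 1/s_n$ on a set of $\mu$-mass $\lesssim 4^{-n}$, again giving $\lesssim a_n$. I expect this symmetric-difference estimate to be the main technical hurdle, since it is the only step where the Cantor-set combinatorics are used essentially, and the strict inequality $\lambda<\tfrac12$ is genuinely needed for the lower bound $|x-y|\gtrsim s_n$ when $y\in K\setminus Q_n(x)$.
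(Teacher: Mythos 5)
Your proposal is correct and takes essentially the same route as the paper: you compare $S_n$ with the $\varepsilon$-truncated integral at the scale $\varepsilon\sim s_n$ and bound the difference by $a_n$, the only cosmetic difference being that you insert the cube-truncation $T_n$ as an explicit intermediate and estimate $|S_n-T_n|$ and $|T_n-T_\varepsilon|$ separately, whereas the paper works from the mean-value representation $S_n(x)=\int_{K\setminus Q_n(x)}C(z(y)-y)\,d\mu y$ and splits $|S_n(x)-T_\varepsilon(x)|$ directly into a far-field term (handled as in Lemma \ref{inc}) plus two near-field terms over $Q_{n-1}\setminus Q_n$ and $Q_{n-1}\setminus B(x,\varepsilon)$.
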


\begin{proof} Set $Q_n=Q_n(x)$ for $x\in K$ and $n=1,2, \dots$
Then by the proof of Lemma \ref{inc}
\begin{align*}
\left|S_n(x)-\int_{K\setminus Q_n} \frac{1}{x-y}\,d\mu y \right| &= \left|\fint_{Q_n} \int_{K\setminus Q_n} \left( \frac{1}{x'-y} -\frac{1}{x-y}\right)\,d\mu y\, d\mu x' \right| \\*[7pt]
 &\leq C\,a_n,
\end{align*}
where the constant depends on $\lambda.$
Compare now a given  truncation $\int_{K\setminus B(x,\e)} \frac{1}{x-y}\,d\mu y,$  $0< \e<1,$ with  $\int_{K\setminus Q_n} \frac{1}{x-y}\,d\mu y $ where $n$ is chosen so that $\operatorname{diam}(Q_n)\leq \e < \operatorname{diam}(Q_{n-1}).$ Since $Q_n \subset B(x,\e)$ we have

\begin{align*}
\left|\int_{K\setminus Q_n} \frac{1}{x-y}\,d\mu y - \int_{K\setminus B(x,\e)} \frac{1}{x-y}\,d\mu y\right| &= \left|\int_{B(x,\e) \setminus Q_n} \frac{1}{x-y} \,d\mu y \right| \\*[7pt]
 &\leq C\, \frac{\mu B(x,\e)}{s_n},
\end{align*}
with $C=C(\lambda).$
To complete the proof just remark that, since $\e < \operatorname{diam}(Q_{n-1})$,  $B(x,\e)$ can intersect at most $N$ squares in $\D_n,$ with $N$ an absolute constant. Hence $\mu B(x,\e) \leq C\,\mu(Q_n).$
\end{proof}

\vspace{0.3cm}

We proceed now to discuss relative martingales.

For $x\in R \subset Q, \,Q\in \mathcal D_m,\, R\in \mathcal D_n, \, m<n,$ we define the relative martingale starting at $Q$ as
$$S_{Q,R}(x)=S_{Q,R} = \fint_R \int_{Q\setminus R}C(z-y)\,d\mu y \,d\mu z.$$

Then for some  constant $C$, 
\begin{equation}\label{eq3}
|S_{R} - S_Q - S_{Q,R}| \leq C\,a_m.
\end{equation} 

Indeed, we have
\begin{align*}
&S_{R} - S_Q =\\ 
&\fint_{R}\int_{K\setminus R}C(z-y)\,d\mu y\,d\mu z - \fint_{Q}\int_{K\setminus Q}C(w-y)\,d\mu y\,d\mu w=\\
&\int_{K\setminus Q}(\fint_{R}C(z-y)\,d\mu z - \fint_{Q}C(w-y)\,d\mu w)\,d\mu y + \int_{Q\setminus R}\fint_{R}C(z-y)\,d\mu z\,d\mu y=\\
&\int_{K\setminus Q}\left(\fint_{R}C(z-y)\,d\mu z - \fint_{Q}C(w-y)\,d\mu w\right)\,d\mu y + S_{Q,R}.
\end{align*}
The first summand above is bounded in absolute value by a constant times $a_m$ by the same argument as in the proof of \eqref{eq2}.

As for \eqref{eq2} we have for $R\subset \tilde R \subset Q, Q\in \mathcal D_m, \tilde R\in \mathcal D_n, R	\in \mathcal D_{n+1},$ 

\begin{equation}\label{eq7} 
|S_{Q,R} - S_{Q,\tilde R}| \leq C\,a_n.
\end{equation}

\section{The stopping time argument}\label{st}

The proof of Theorem \ref{martthm} is based on a stopping time argument for which we need some preliminary facts.

Given a non-zero complex number $z$ consider the sector $\sigma(z,\theta), \; 0<\theta <\pi,$ with vertex at $z$ and aperture $\theta$ whose axis is the semi-line emanating from $z$ and passing through $0.$  That is, $w\in \sigma(z, \theta)$ if and only if 
$$
\langle \frac{w-z}{|w-z|}, \frac{-z}{|z|} \rangle \geq \cos(\frac{\theta}{2})
$$
where $\langle \cdot, \cdot \rangle$ denotes the scalar product in the plane. 

The octants with vertex $0$ are the eight sectors
$$\sigma_j = \{w \in \C: w=|w| e^{i \phi},  \; (j-1)\frac{\pi}{4}\le  \phi \le j \frac{\pi}{4} \}, \quad 1\le j \le 8.$$
These are the sectors with vertex the origin of amplitude $45^{\circ}$ degrees and having an edge over a coordinate axis.
It will be convenient to expand these octants so that they have the same axis and amplitude of $75^{\circ}.$ In other words, we are adding $15^{\circ}$ in each direction. Denote the expanded sectors by $\tilde{\sigma}_j.$
The octants with vertex $z$ are the sectors $\sigma_j(z)= z+\sigma_j, \; 1\le j \le 8,$ and the expanded octants $\tilde{\sigma}_j(z)=z+\tilde{\sigma}_j.$

We have the following obvious lemma.

\begin{lm}\label{sector}
Given any sector $\sigma$ of vertex $z$ and amplitude $120^{\circ}$ there exists an octant with vertex $z,$ say $\sigma_j(z)$ for some index $j$ between $1$ and $8,$ such that $\tilde{\sigma}_j(z) \subset \sigma.$
\end{lm}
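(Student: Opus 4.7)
The plan is a direct pigeonhole argument on angles. After translating so that $z=0$, every sector with vertex $z$ is encoded by an arc on the unit circle, and containment of sectors becomes containment of arcs. The sector $\sigma$ corresponds to a closed arc $I_\sigma$ of angular length $120^{\circ}$, while each expanded octant $\tilde\sigma_j$ corresponds to a closed arc $J_j$ of angular length $75^{\circ}$. The eight arcs $J_1,\dots,J_8$ have left endpoints at angles $(j-1)\tfrac{\pi}{4}-\tfrac{\pi}{12}$, which are \emph{equally spaced around $S^1$ with spacing exactly $\tfrac{\pi}{4}=45^{\circ}$}.

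Next I would translate ``$J_j\subset I_\sigma$'' into a condition on left endpoints. If $I_\sigma=[\alpha,\alpha+120^{\circ}]$, then $J_j=[\beta_j,\beta_j+75^{\circ}]\subset I_\sigma$ iff $\beta_j\in[\alpha,\alpha+45^{\circ}]$. So the problem reduces to showing that among eight points on $S^1$ spaced exactly $45^{\circ}$ apart, at least one lies in the closed arc $[\alpha,\alpha+45^{\circ}]$. This is immediate: a closed arc of length equal to the spacing of an equally spaced $n$-point set on the circle always contains at least one of those points, since the eight arcs between consecutive $\beta_j$'s are closed intervals of length $45^{\circ}$ covering $S^1$, and $\alpha$ must lie in one of them, forcing its right endpoint $\beta_j$ to lie in $[\alpha,\alpha+45^{\circ}]$.

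The chosen index $j$ then gives the desired octant: $\tilde\sigma_j(z)=z+J_j\subset z+I_\sigma=\sigma$. There is really no obstacle here — the only ``care'' required is handling angles modulo $2\pi$ cleanly and verifying that the numerics match, namely that the amount of slack $120^{\circ}-75^{\circ}=45^{\circ}$ equals the spacing of the expanded octants. This matching is precisely why the authors chose to expand the $45^{\circ}$ octants by exactly $15^{\circ}$ on each side: any smaller expansion would still work, but $75^{\circ}$ is the maximum width compatible with the pigeonhole step, and it is this maximality (rather than the lemma itself) that will matter in later applications where one needs the expanded octant to be as large as possible.
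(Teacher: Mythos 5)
Your proof is correct. The paper labels this lemma ``obvious'' and offers no proof at all, so there is no argument of theirs to compare against; your reduction of sector containment to arc containment on $S^1$, followed by the pigeonhole observation that a closed $45^\circ$ arc must contain one of eight points spaced exactly $45^\circ$ apart, is a clean and accurate way to make the obviousness precise. The numerics also check out as you say: $J_j\subset I_\sigma$ with $I_\sigma=[\alpha,\alpha+120^\circ]$ and $J_j=[\beta_j,\beta_j+75^\circ]$ is equivalent to $\beta_j\in[\alpha,\alpha+45^\circ]$, and $75^\circ$ is indeed the largest expanded-octant width for which the pigeonhole step succeeds. One tiny stylistic caveat: when you say ``forcing its right endpoint $\beta_j$ to lie in $[\alpha,\alpha+45^\circ]$'' you should make explicit (as you essentially do) that if $\alpha\in[\beta_j,\beta_{j+1}]$ then $\beta_{j+1}\ge\alpha$ and $\beta_{j+1}=\beta_j+45^\circ\le\alpha+45^\circ$, which is the full justification; this is the ``care modulo $2\pi$'' you flagged, and it is handled adequately.
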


Consider the symmetries with respect to the coordinate axis and the main diagonal.  That is,
 $f_{1}(x+iy)=-x+iy, f_{2}(x+iy)=x-iy$ and $f_{3}(x+iy)=y+ix$ for $x+iy\in\C$. For any $j,k=1,\dots,8$, by composing two such symmetries we obtain a linear mapping $f_{j,k}$ that maps the octant $\sigma_j$ onto the octant $\sigma_k$. Observe that $C(f_{j}(z))=f_{j}(C(z))$ for $j=1,2$, and  $C(f_{3}(z))=-f_{3}(C(z))$. It is precisely this last identity that fails for the kernel $(z+\overline{z})/z^2.$
 
 Let $Q\in\D$ and let $c_Q$ be its center. Define 
$$f_{Q,j,k}(x)=f_{j,k}(x-c_Q)+c_Q \quad  x\in Q, \quad j,k=1,\dots,8,$$
so that
$$
f_{Q,j,k}(x)-f_{Q,j,k}(y) = f_{j,k}(x-y), \quad x,y \in Q, \quad j,k=1,\dots,8.
$$
We claim that
\begin{equation}\label{eq24}
S_{Q,f_{Q,j,k}(R)}=\e_{j,k}\,f_{j,k}(S_{Q,R}), \quad R\subset Q, \quad Q,R\in\D,
\end{equation}
where $\e_{j,k}=\pm 1.$   We check \eqref{eq24} by the general formula for the image (push-forward) $\nu^{\sharp,f}$ of a measure $\nu$ under a Borel map $f$ (see, for example, \cite[Theorem 1.19]{M})
$$\int_{f(A)}g\,d\nu^{\sharp,f} = \int_{A}(g\circ f)\,d\nu.$$
The restriction of $\mu$ to $Q$ is invariant under the maps $f_{Q,j,k}$, that is, $(\mu|Q)^{\sharp, f_{Q,j,k}} = \mu|Q$. Hence, since $Q\setminus f_{Q,j,k}(R)=f_{Q,j,k}(Q\setminus R)$ and  
$$C\big(f_{Q,j,k}(z)-f_{Q,j,k}(w)\big)=\e_{j,k}\,f_{j,k}(C(z-w)),$$
we obtain
$$\int_{Q\setminus f_{Q,j,k}(R)}\int_{f_{Q,j,k}(R)}C(z-w)\,d\mu z\,d\mu w = \e_{j,k}\,f_{j,k}\left(\int_{Q\setminus R}\int_{R}C(z-w)\,d\mu z\,d\mu w\right),$$
from which \eqref{eq24} follows.

Assume that we have fixed an octant $\sigma_j$ and that for some square $R \in \mathcal{D}$ contained in $Q$ we have $S_{Q,R} \in \sigma_k$ with $k\neq j.$  We claim that we can find a square $R' \in \mathcal{D}$ contained in $Q,$ of the same size as $R,$ such that 
$|S_{Q,R'}|=|S_{Q,R}|$ and $S_{Q,R'} \in \sigma_k.$

If $\e_{j,k}=1$ then the value of the relative martingale at the square $f_{Q,j,k}(R)$ is $f_{j,k}(S_{Q,R}) \in \sigma_j.$  Note that the size of $f_{Q,j,k}(R)$ is exactly the size of $R$ and $\left| S_{Q,f_{Q,j,k}(R)} \right| = \left| S_{Q,R}\right|.$

To treat the case $\e_{j,k}=-1$ let us introduce the mapping $\gamma : Q \rightarrow Q$ defined by $\gamma(x)=-(x-c_Q)+c_Q.$ Then
$\gamma^2$ is the identity mapping on $Q$ and $S_{Q,\gamma(R)}=-S_{Q,R}$ for each square $R\in \mathcal{D}$ contained in $Q.$ Setting $R'=(\gamma \circ f_{Q,j,k})(R)$ we get
\begin{align*}
f_{jk}(S_{Q,R})&=-S_{Q, f_{Q,j,k}(R)}=S_{Q, (\gamma \circ f_{Q,j,k})(R)}=S_{Q,R'}.
\end{align*}

We shall need the following elementary lemma.

\begin{lm}\label{martlemma}
If $z\in\C, w\in\sigma(z, 120^\circ)$ and $0<|w-z|<|z|/2$, then $|w|\leq |z|-|w-z|/4$.
\end{lm}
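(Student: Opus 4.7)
The plan is to apply the law of cosines to the triangle with vertices $0$, $z$, $w$ and reduce the claim to an elementary algebraic inequality. Set $R = |z|$ and $r = |w-z|$, and let $\theta$ denote the angle at the vertex $z$ in this triangle, that is, the angle between the vectors $w - z$ and $-z$. The assumption $w \in \sigma(z, 120^\circ)$ unpacks, via the definition given just before Lemma \ref{sector}, to $\cos\theta \ge \cos 60^\circ = 1/2$.

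By the law of cosines applied to the side opposite the vertex $z$, which has length $|w|$,
\[
|w|^2 \;=\; R^2 + r^2 - 2 R r \cos\theta \;\le\; R^2 - R r + r^2.
\]
I then aim to bound the right-hand side above by $(R - r/4)^2 = R^2 - R r /2 + r^2/16$. This inequality reduces to $15 r / 16 \le R / 2$, i.e.\ $r \le 8 R / 15$, which is implied by the hypothesis $r < R/2$. Since the hypothesis also ensures $R - r/4 > 0$, taking square roots of both sides yields $|w| \le R - r/4$.

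I do not foresee any substantial obstacle: the statement is essentially a quantitative triangle inequality in a cone, and the constant $1/4$ is simply what the algebra produces. The only conceptual point worth flagging is that both hypotheses are genuinely used in tandem, namely the sector condition produces the factor $-R r$ in the law-of-cosines expansion, while the smallness assumption $r < R/2$ is what absorbs the leftover $r^2$ term and upgrades this to a definite linear improvement $-r/4$ over $R$.
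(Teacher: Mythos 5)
Your proof is correct and is essentially the same as the paper's: both reduce to the bound $|w|^2 \le R^2 - Rr + r^2$ (the paper obtains it by noting $|w|$ is extremized on the boundary ray of the sector, which is the $\cos\theta = 1/2$ case of your law-of-cosines estimate) and then close with the identical algebraic comparison to $(R - r/4)^2$ using $r < R/2$.
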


\begin{proof}
Let $R=|z|, r=|w-z|$ and let $v$ be the third vertex, in addition to 0 and $z$, of the equilateral triangle containing $w$. Under the assumptions of the lemma $|w|$ is maximized when $w$ lies on the side connecting $z$ and $v$. Assuming that $w$ is on that side, project $w$ on the side connecting $0$ and $z$ and apply Pythagoras to obtain
$$|w|^2 = (R-r/2)^2+(\sqrt{3}r/2)^2=r^2+R^2-rR\leq (R-r/4)^2=(|z|-|w-z|/4)^2$$
because of the assumption $r<R/2$.
\end{proof}


\begin{proof}[Proof of Theorem \ref{martthm}]

We  assume, as we may, that $\sum_na_n^2=\infty$. Then for $\mu$ almost all $x$ the sequence $(S_{n}(x))_{n=0}^\infty$ diverges and \eqref{eq2bis} holds.


Let $M$ be a big positive integer to be chosen later. We replace $(a_n)_{n=0}^\infty$ by the non-increasing sequence $b_n=C\max_{m\geq n}a_m$, where $C$ is as in inequalities \eqref{eq2}, \eqref{eq3} and \eqref{eq7}, which now read
\begin{equation}\label{eq2b}\
|S_{n+1}(x) - S_{n}(x)|\ \leq b_n, \quad n=0,1,\dots \quad\text{and}  \quad x\in K,
\end{equation} 
\begin{equation}\label{eq3b}
|S_{R} - S_Q - S_{Q,R}| \leq b_m, \quad \,Q\in \mathcal D_m,\, R\in \mathcal D_n, \,  R \subset Q, 
\end{equation} 
and
\begin{equation}\label{eq7b} 
|S_{Q,R} - S_{Q,\tilde R}| \leq b_n, \quad  \,Q\in \mathcal D_m,\, R\in \mathcal D_{n+1}, \,  \tilde{R}\in \mathcal D_{n}, R \subset \tilde{R}
\subset Q, 
\end{equation}

We plan to define a sequence of stopping time conditions. At each step a family of stopping time squares will arise, which is going to be   the family $\F_{n}$ in Lemma \ref{hungerford} (Hungerford's lemma). The first stopping time is special and its goal is to have a family of squares with relatively large $|S_Q|$ for each square $Q$ in the family.

The first stopping time condition is 
\begin{equation}\label{st1} 
\left| S_Q \right| > M\,b_0.
\end{equation}
Declare $Q$ a stopping time square of first generation if $Q$ is a square in $\D$ for which $\left| S_Q \right| > M\,b_0$ and $\left| S_{Q'} \right| \le M\,b_0, \; Q \subsetneq Q'.$  We call $\F_1$ the set of stopping time squares of first generation. One may think at this as a process as follows. One takes a point $x\in K$ and looks at the squares in $\D$ containing $x.$ One examines all those squares, starting at $Q_0$ and checks whether condition \eqref{st1} is satisfied. If it is not, then one proceeds to the square containing $x$ in the next generation.  The process stops when one finds a square $Q$ containing $x$ for which \eqref{st1} holds. Note that the set of $x$ for which the process never stops has vanishing $\mu$ measure by \eqref{eq2bis}. Hence $\sum_{Q\in \F_1} \mu(Q)=1.$
Since  $S_{Q_0}=0,$ it follows from \eqref{eq2b}  that it is necessary to descend at least $M+1$ generations to find the first stopping time square.

The second stopping time condition is slightly different. Let $Q\in \F_1$.  The second stopping time is performed on the relative martingale associated with $Q$ and its condition is
\begin{equation}\label{st2} 
\left| S_{Q,R} \right| > M\,b_M.
\end{equation}
A stopping time square $R$ of second generation satisfies $\left| S_{Q,R} \right| > M\,b_M$ and 
$$\left| S_{Q,R'} \right| \le M\,b_M, \quad R' \in \D, \quad R \subsetneq R' \subset Q.$$
By \eqref{eq2bis} and \eqref{eq3b} the stopping time squares of second generation cover almost all $Q.$ Again, by \eqref{eq7b} and the fact that $S_{Q,Q}=0$ one has to descend through at least $M+1$ generations to find a stopping time square of second generation. Hence if $R$ is a stopping time square of second generation and $R\in \D_n$ then $n \geq 2(M+1).$  We do not put all stopping time squares of second generation in $\F_2(Q).$ We put a stopping time square of second generation $R$ in $\F_2(Q)$ provided $S_{R} \in \sigma(S_Q, 120^\circ).$  That there are many such stopping time squares can be shown as follows.

Let $R$ be a stopping time square of second generation. Let $\a$ denote the angle between the vectors $S_{R} - S_Q$ and $S_{Q,R}$. Then by \eqref{eq3b},
$$
|S_{R} - S_Q| \ge |S_{Q,R}|- b_M \ge (M-1)\, b_M
$$
and
$$0\leq |\sin \a| \leq \frac{|S_{R} - S_Q - S_{Q,R}|}{|S_{R} - S_Q|} \leq \frac{b_M}{(M-1)\,b_M}=\frac{1}{M-1}<\sin 15^{\circ},$$
provided $M-1>1/\sin 15^{\circ},$ which we assume. Since $|S_{R} - S_Q - S_{Q,R}| < |S_{R} - S_Q|$ and $S_{R,Q}= S_{R} - S_Q+(S_{Q,R} -S_{R} - S_Q)$, we see that $\cos \alpha > 0.$  Thus $|\a|< 15^{\circ}.$  

By Lemma \ref{sector} there is $j$ with $1\le j\le 8$ such that $\tilde{\sigma}_j(S_Q) \subset \sigma(S_Q, 120^\circ).$ If we are lucky enough $S_{Q,R}  \in \sigma_j$ and so $S_R -S_Q \in  \tilde{\sigma_j},$ which yields $S_R \in \tilde{\sigma_j}(S_Q) \subset \sigma(S_Q, 120^\circ).$ 

But  it may occur that $S_{Q,R} \in \sigma_k, \; k\neq j.$   Applying two symmetries $f_{Q,j,k}$ of $Q,$ or a symmetry of the form $\gamma \circ f_{Q,j,k}$ in the worst case, as we discussed before Lemma \ref{martlemma}, we obtain a stopping time square $R'$  of second generation and of the same size as $R$ such that $S_{Q, R'}
\in \sigma_j$  and so $ S_{R'} \in \tilde{\sigma_j}(S(Q)) \subset \sigma(S_Q, 120^\circ),$  as desired.

Therefore, subdividing the stopping time squares of second generation in eight classes, according to the octant to which $S_{Q,R}$ belongs, we get
\begin{equation}\label{sumst2}
\sum_{R\in\F_2(Q)}\mu(R) \geq \frac{1}{8} \,\mu(Q).
\end{equation}
Define $\F_2=\cup_{Q\in \F_1} \F_2(Q).$

Let us obtain some properties of stopping time squares $R$ in $\F_2(Q).$ Let $\tilde R$ be the father of $R.$ Then $\left|S_{Q,\tilde R}\right|\leq M\,b_{M}$ and so 
$$\left| S_{\tilde{R}}-S_Q\right| \le  \left| S_{Q,\tilde{R}}\right|+\left| S_{\tilde{R}}-S_Q-S_{Q,\tilde{R}}\right| \le (M+1)b_M  $$
and
$$
|S_R-S_Q|\leq |S_R-S_{\tilde R}|+|S_{\tilde R}-S_Q|\leq b_{M}+(M+1)b_{M}=(M+2)b_{M}.
$$

Now two possibilities appear.

If $|S_Q|\leq 2|S_R-S_Q|\leq 2(M+2)b_{M}$, then
\begin{equation*}\label{}
|S_R|\leq |S_R-S_Q|+|S_Q|\leq 3(M+2)b_{M}.
\end{equation*}
If $|S_Q|> 2|S_R-S_Q|$,  since $S_R \in \sigma(S_Q,120^\circ)$ we can apply Lemma \ref{martlemma} to get
\begin{equation*}\label{}
|S_R|\leq |S_Q|-|S_R-S_Q|/4\leq |S_Q|- (M-1)b_{M}/4\leq |S_Q|- b_{M}
\end{equation*}
provided  $M\geq 5.$

Therefore at least one of the following two inequalities holds: either
\begin{equation}\label{mar10}
|S_R| \leq 3(M+2)b_{M},
\end{equation}
or
\begin{equation}\label{mar11}
|S_R| \leq |S_Q|- b_{M}.
\end{equation}

We can proceed to define inductively $\F_n$  for $n\geq3,$  in a way analogous to what we did to define $\F_2$ from $\F_1.$  Assume that we have defined $\F_{n-1}=\cup_{Q\in \F_{n-2}} \F_{n-1}(Q).$
Given $Q \in \F_{n-1}$ we set the $n$ generation stopping time in the relative martingale associated with $Q$ as
$$
|S_{Q,R}|> Mb_{(n-1)M} 
$$
If $R$ is a stopping time square of $n$-th generation then besides the previous inequality one has
$$
|S_{Q,R'}|\leq Mb_{(n-1)M}, \quad R' \in \D,	\quad R\varsubsetneq R' \subset Q, 
$$
whence
\begin{equation}\label{mar12}
|S_{R'}-S_Q| \leq |S_{Q,R'}| + b_{(n-1)M} \leq (M+1)b_{(n-1)M}.
\end{equation}
Note that if $R$ is a stopping time square of generation $n,$ we can take advantage of the symmetries of $Q,$ as before, to find another one, say $R',$
of the same size with the additional property that $S_{R'} \in \sigma(S_Q, 120^\circ).$ Define $\F_n(Q)$ as the stopping time squares $R$ of generation $n$ such that $S_R \in \sigma(S_Q, 120^\circ)$ and $\F_n=\cup_{Q\in \F_{n-1}} \F_{n}(Q).$
We then have
\begin{equation}\label{sumstn}
\sum_{R\in\F_n(Q)}\mu(R) \geq \frac{1}{8} \,\mu(Q).
\end{equation}
Given $R\in\F_{n}(Q)$, we have as before that at least one of the following two inequalities holds:  either
\begin{equation}\label{mar14}
|S_{R}|\leq 3(M+2)b_{(n-1)M}
\end{equation}
or
\begin{equation}\label{mar15}
|S_R|\leq |S_Q|-b_{(n-1)M}.
\end{equation}

Set $F=\bigcap_{n=1}^{\infty}\bigcup_{Q\in\F_n}Q.$
To complete the proof we shall show that the hypotheses of Hungerford's Lemma \ref{hungerford}  are fulfilled and that
\begin{equation}\label{mar17}
\lim_{m\to\infty}S_m(x)=0, \quad  x\in F.
\end{equation}

For (b) in Hungerford's Lemma \ref{hungerford} recall that each stopping time square has descended at least $M+1$ generations from the generating square in the previous family. Then one has (b) with $\e$ replaced by $\frac{1}{4^M}$ and taking $M$ big enough one has $\frac{1}{4^M} <\e.$  Condition (c) with $c=\frac{1}{8}$ is  \eqref{sumstn}. 

To prove \eqref{mar17}, take $x\in F$. For every $n=1,2,\dots$, there is a unique $Q_n \in\F_n$ such that $x\in Q_n.$  Let $m_n$ be the unique positive integer satisfying $Q_n \in\D_{m_n}.$   Clearly the sequence $m_n$ is increasing and $m_n > M\,n.$  Since $S_{Q_{n}}=S_{m_n}(x)$  we have by \eqref{mar14} and \eqref{mar15} that either
\begin{equation}\label{mar19}
\left|S_{m_n}(x)\right| \leq 3(M+2)b_{(n-1)M}
\end{equation}
or
\begin{equation}\label{mar18}
|S_{m_n}(x)|\leq |S_{m_{n-1}}(x)|- b_{(n-1)M}, \quad n=1, 2, \dots
\end{equation}

 For  $m_{n-1}< m<m_{n}$ we have by \eqref{mar12} 
\begin{equation}\label{mar20}
\left|S_{m}(x)-S_{m_{n-1}}(x)\right|\leq (M+1)b_{(n-1)M}.
\end{equation}
To conclude that $\lim_{m\to\infty}S_m(x)= 0$ it is enough to show that $\lim_{n\to\infty}S_{m_n}(x)= 0$.

We say that $n\in\mathcal N_1,$ if \eqref{mar18} holds and $n\in\mathcal N_2,$ if \eqref{mar19} holds and \eqref{mar18} fails. 
As $\sum_n b_n$ diverges and $(b_n)_{n=1}^\infty$ is non-increasing, also $\sum_n b_{(n-1)M}$ diverges. It follows that \eqref{mar18} cannot hold for infinitely many consecutive $n$, whence $\mathcal N_2$ is infinite.

Let $n\in\mathcal N_2$ and let $N>n$ be such that $k\in\mathcal N_1$ for all $n<k<N$. Then by \eqref{mar18} and \eqref{mar19} for $n<k<N$,
$$|S_{m_k}(x)|\leq |S_{m_n}(x)|\leq 3(M+2)b_{(n-1)M}.$$ 
It follows that $\lim_{m\to\infty}S_m(x)= 0$.

\end{proof}

\section{Appendix 1: a lemma on Hausdorff dimension}\label{A1}

\vspace{0,4cm}
Let $\mu$ be the canonical measure associated with a Cantor set in $\mathbb{R}^d$, as defined in the Introduction before the statement of Theorem \ref{pvthm2}. Denote by  $\D_n$ the set of all cubes $Q_j^n, \; 1\le j\le 2^{dn},$ appearing at the $n$-th generation of the construction and $\D=\cup_n \D_n.$

The following lemma is due to Hungerford, who worked in a one dimensional context; see \cite{H}. 

\begin{lm}\label{hungerford}
Let $0<\e<c<1$ and let $\F_n$ be a disjoint family of cubes in $\D$, for $n=0,1,2,\dots$, satisfying the following.
\begin{enumerate}
\item[(a)] $\F_0=\{Q_0\},$
\item[(b)] if $Q\in\F_{n+1}$, then there exists $\tilde Q\in\F_n$ with $Q\subset\tilde Q$ and $\mu(Q)\leq \e \mu(\tilde Q)$,
\item[(c)] if $Q\in\F_{n}$, then
$$\sum_{R\subset Q, \,R\in\F_{n+1}}\mu(R) \geq c\,\mu(Q).$$
\end{enumerate}
Let $E=\cap_n\cup_{Q\in\F_n}$. Then
$$\dim E \geq \alpha \left(1 - \log c/\log \e \right).$$
\end{lm}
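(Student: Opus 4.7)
The plan is to construct a probability measure $\nu$ on $E$ enjoying a Frostman-type decay $\nu(Q)\lesssim\mathrm{diam}(Q)^s$ for $s:=1-\log c/\log\e$, and then invoke the mass distribution principle to conclude $\dim E\ge s$.

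I would first define $\nu$ inductively along the stopping-time tree $\bigcup_n\F_n$. Starting from $\nu(Q_0)=1$, I distribute the mass of each $Q\in\F_n$ among its children $R\in\F_{n+1}$, $R\subset Q$, proportionally to $\mu(R)$, namely
$$\nu(R)=\nu(Q)\cdot\frac{\mu(R)}{\sum_{R'\subset Q,\,R'\in\F_{n+1}}\mu(R')}.$$
Condition (c) bounds the denominator below by $c\,\mu(Q)$, so induction gives $\nu(Q)\le c^{-n}\mu(Q)$ for $Q\in\F_n$. Iterating (b) gives $\mu(Q)\le\e^n$, hence $\nu(Q)\le(\e/c)^n$.

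Next I would translate this generation-index bound into a diameter bound. For $Q\in\F_n\cap\D_m$ one has $\mu(Q)=4^{-m}\le\e^n$, which forces $n\ge m\log 4/\log(1/\e)$; substituting this minimal $n$ into $(\e/c)^n$ and using the algebraic identity $s=\log(\e/c)/\log\e$ yields $\nu(Q)\le 4^{-ms}=\mu(Q)^s$. Since $a_m\le 1$ forces $\mu(Q)=4^{-m}\le s_m\sim\mathrm{diam}(Q)$, this produces the Frostman bound $\nu(Q)\lesssim\mathrm{diam}(Q)^s$ on every $Q\in\bigcup_n\F_n$. To apply the mass distribution principle I would then extend this to arbitrary balls $B=B(x,r)$ meeting $E$: choose $N$ so large that every $Q\in\F_N$ meeting $B$ has $\mathrm{diam}(Q)\le r$, bound $\nu(B)\le\sum\nu(Q)$ over such $Q$, and group them by their $\F_k$-ancestor. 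Disjointness together with $\mu(3B)\lesssim r$ controls the total $\mu$-mass contribution, and a geometric sum in $k$ delivers $\nu(B)\lesssim r^s$.

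The main obstacle is this last step. Stopping-time squares in a single generation $\F_n$ have widely varying diameters, so a single ball may meet squares from many generations at once. The technical heart is to show that the $c^{-n}$-blow-up factors introduced by the distribution of $\nu$ are always dominated by the $\e^n$-shrinkage of $\mu$ forced by (b); the exponent $s=1-\log c/\log\e$ is precisely the break-even rate between these two competing geometric factors, which is what makes the whole argument close.
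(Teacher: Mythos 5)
Your construction of $\nu$ is exactly the one in the paper: distribute mass down the stopping-time tree proportionally to $\mu$, getting $\nu(Q)\le c^{-n}\mu(Q)$ for $Q\in\F_n$ by (c), and $\mu(Q)\le\e^n$ by iterating (b), hence $\nu(Q)\le\mu(Q)^\beta\lesssim d(Q)^\beta$ for stopping-time squares. Up to that point you and the paper agree. The gap is the passage from stopping-time squares to arbitrary balls, which you explicitly flag as the ``main obstacle'' and leave as a sketch. The sketch is not only incomplete but misdirected: grouping the $\F_N$-squares inside $B$ by $\F_k$-ancestor and invoking $\mu(3B)\lesssim r$ does not control $\sum\nu(Q)$, because a single $\F_k$-ancestor $P$ can be enormous compared to $B$, its $\mu$-mass need not be comparable to $r$, and the factor $c^{-k}$ in $\nu(P)\le c^{-k}\mu(P)$ then has nothing to cancel it. You would need to sort the ancestors by scale relative to $r$, count the large ones geometrically, and handle the small ones by the linear growth of $\mu$ — none of which you carry out, and it is not clear the geometric sum in $k$ that you gesture at converges to $r^\beta$ rather than to something worse.

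The paper avoids all of this by proving the Frostman estimate $\nu(Q)\le Cd(Q)^\beta$ for \emph{every} $Q\in\D$, not only for stopping-time squares. The trick is to choose $n$ with $\e^{n+1}\le\mu(Q)<\e^n$. Any $R\in\F_{n+1}$ meeting $Q$ has $\mu(R)\le\e^{n+1}\le\mu(Q)$; since two squares of $\D$ that meet are nested, this forces $R\subset Q$, and since $\nu$ is carried by $\bigcup_{R\in\F_{n+1}}R$ one gets
\begin{equation*}
\nu(Q)=\sum_{R\subset Q,\,R\in\F_{n+1}}\nu(R)\le c^{-n-1}\sum_{R\subset Q,\,R\in\F_{n+1}}\mu(R)\le c^{-n-1}\mu(Q),
\end{equation*}
and then $c^{-n}\mu(Q)\le\mu(Q)^\beta$ follows from $\mu(Q)\le\e^n$ as in your algebra. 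Once the bound holds for all dyadic squares, a ball $B(x,r)$ with $x\in E$ meets a bounded number of squares from the generation $\D_m$ whose side length is comparable to $r$, and $\nu(B(x,r))\lesssim r^\beta$ is immediate. You should incorporate this intermediate step: it is the missing idea, and with it the rest of your argument closes cleanly without any ancestor-grouping.
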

\begin{proof}
Set $\beta= \alpha\left(1 - \log c/\log \e \right)$. We will construct a Borel probability measure $\nu$ with $\nu(E)=1$ such that for some constant $C$ and for all balls $B(x,r)$ centred at $x$ of radius $r$ one has
\begin{equation}\label{hunger}
\nu(B(x,r))\leq Cr^{\beta}, \quad \text{for}\; x\in E, \quad 0<r\leq 1.\end{equation} 
Then Frostman's lemma will give the result. 

Let us define the functions $\nu_n:\F_n\to \R, n=0,1,2\dots,$ setting first $\nu_0(Q_0)=1$. Suppose that $\nu_1,\dots,\nu_{n-1}$ are defined and let for $Q\in\F_n$, with $\tilde Q$ as in (b),
$$\nu_n(Q) = \frac{\nu_{n-1}(\tilde Q)}{\sum_{R\in \F_n, R\subset\tilde Q}\mu(R)}\mu(Q).$$
Then we define the Borel measures $\nu_n$ setting
$$\nu_n(A) = \sum_{Q\in \F_n}\frac{\nu_n(Q)}{\mu(Q)}\mu(A\cap Q)\ \text{for}\ A\subset \R^d.$$


Then for $Q\in \F_n$,
\begin{align*}
&\nu_{n+1}(Q)=\sum_{R\in \F_{n+1}, R\subset Q}\nu_{n+1}(R)=\\
&\sum_{R\in \F_{n+1}, R\subset Q}\frac{\nu_{n}(Q)}{\sum_{P\in \F_{n+1}, P\subset Q}\mu(P)}\mu(R)=\nu_{n}(Q).
\end{align*}
Iterating this we have 
\begin{equation}\label{hunger2}
\nu_{m}(Q)=\nu_{n}(Q)\ \text{for}\ Q\in \F_n, m>n.\end{equation}
In particular, each $\nu_n$ is a probability measure and some subsequence of $(\nu_n)$ converges weakly to a probability measure $\nu$ such that $\nu(Q)=\nu_n(Q)$ for $Q\in\D_n$. 


Since
$$\nu(\bigcup_{Q\in \F_n}Q)=\sum_{Q\in \F_n}\nu(Q)=\sum_{Q\in \F_n}\nu_n(Q)=1,$$
we have $\nu(E)=1$. Therefore $\nu(E\setminus\cup_{Q\in\F_n}Q)=0$ for every $n$, so  
\begin{equation}\label{hunger1}
\nu(Q)= \sum_{R\subset Q, R\in \F_{n+1}}\nu(R), \quad Q\in\F_n.
\end{equation}

It remains to verify \eqref{hunger}. 
First of all we have by condition (c) for $Q\in\F_n, n\geq 2$,
$$\frac{\nu(Q)}{\mu(Q)}=\frac{\nu_n(Q)}{\mu(Q)}=\frac{\nu_{n-1}(\tilde Q)}{\sum_{R \subset\tilde Q, R\in \F_n}\mu(R)}
\leq \frac{\nu(\tilde Q)}{c\mu(\tilde Q)},$$
and by induction,
\begin{equation}\label{hunger3}
\frac{\nu(Q)}{\mu(Q)}\leq c^{-n}\ \text{for}\ Q\in\F_n, n=1,2\dots.\end{equation}

Now let us prove that 
\begin{equation}\label{eq6}
\nu(Q) \leq Cd(Q)^{\beta}\ \text{for}\ Q\in\D.\end{equation} 
Take $n$ such that $\e^{n+1} \leq \mu(Q) < \e^n$. We may assume that $\nu(Q)>0$. Then $Q$ intersects a square $R$ in the family $\F_{n+1}.$  Since by (b) 
$\mu(R) \le \e^{n+1} \le \mu(Q)$, one has $R \subset Q.$ We have, by \eqref{hunger1} and \eqref{hunger3},
$$\nu(Q)= \sum_{R\subset Q, R\in \F_{n+1}}\nu(R)\leq c^{-n-1}\sum_{R\subset Q, R\in \F_{n+1}}\mu(R)\leq c^{-n-1}\mu(Q).$$
Since $\mu(Q)\le d(Q)^{\alpha}$ it is enough to show that $c^{-n}\mu(Q)\leq \mu(Q)^{\frac{\beta}{\alpha}}$ which is 
$$c^{-n}\leq \mu(Q)^{-\log c/\log \e},$$
that is, 
$$-n\log c\leq-(\log c/\log \e)\log\mu(Q),$$ or 
$n\leq\log \mu(Q)/\log \e$, which is a consequence of $\mu(Q) < \e^n$.

To finish, let $x\in E$ and $0<r\leq 1$. For some $n$,\ $x$ belongs to a square $Q\in\D_n$ with $d(Q)/4\leq r \leq d(Q)$.   Then $B(x,r)$ can meet at most $4^d$ squares of $\D_n$, and so by \eqref{eq6}, $\nu(B(x,r))\leq 4^d\,\nu(Q)\leq 4^d\,C\,d(Q)^{\beta}\leq 4^{\beta+d}\,C\,r^{\beta}$ and \eqref{hunger} follows. 
\end{proof}

\section{Appendix 2: the Riesz transforms in $\R^d.$}\label{A2}

We first slightly modify the argument in \cite{CPV} to show that $\sum_{n=1}^{\infty} a_n^2 =\infty$ yields divergence \text{a.e.} of the martingale. If the martingale converges in a set of positive measure, then also the principal values of the Riesz transform exist in a set $E$ of positive measure, by the analog of Lemma \ref{pv1}. By a result of Tolsa \cite[Theorem 8.13]{T} we find a set $F\subset E$ of positive measure on which the singular Riesz transform operator is bounded on $L^2(\mu_{|F}).$  In particular, the capacity of $F$ associated with the Riesz kernel is positive and so also that of the Cantor set. The main result of \cite{MT} (see Theorem 1.2, p. 678 and its extension in the last formula in p. 696)  states that the $\alpha$-Riesz capacity of the Cantor set is comparable to $(\sum_{n=1}^{\infty} a_n^2)^{-\frac{1}{2}},$ so that positive capacity yields a convergent series. We remark that the previous argument uses very strong results, in particular the non-homogeneous $T(1)$-Theorem of Nazarov, Treil and Volberg, to extract the subset $F$ on which the singular Riesz transform is $L^2(\mu_{|F})$ bounded. In \cite{CPV} one resorts to Menger curvature, which is not available for kernels of homogeneity $-\alpha$ with $1<\alpha<d,$ and the proof is slightly simpler. It would be desirable to have a direct argument relating the series to the convergence of the martingale.

The part of the stopping time argument of section \ref{st} that does not obviously extend  to higher dimensions is related to the sector
$\sigma(z,120^\circ).$ In particular, one should replace the $45^\circ$ degrees sectors centred at the origin with one edge on a coordinate axis with other regions. We proceed as follows. Divide $\R^d$ into $2^d$ regions (which in $\R^3$  are the usual octants) by requiring that each coordinate has a definite sign. For example,
$$
O=\{x \in \R^d: x_1 \ge 0, x_2\ge 0, \dots x_d \geq 0\}
$$
or
$$
O'=\{x \in \R^d: x_1 \le 0, x_2\ge 0, \dots x_d \geq 0\}
$$
are such regions. Let us concentrate in the region $O.$
Divide $O$ in the $ d !$ subregions determined by a permutation $\sigma$ of the $d$ variables
$$
O_\sigma= \{x \in \R^d:  0\le x_{\sigma(1)} \le x_{\sigma(2)} \le \dots \le x_{\sigma(d)}\}.
$$
Note that the maximal angle between two vectors lying in a subregion $O_{\sigma}$ is precisely $\arccos(d^{-1/2}),$ which approaches $90^\circ$ as $d \rightarrow \infty.$  Given a cone $\Gamma$ with vertex at the origin and aperture $\theta,$ we would like to find a region $O_\sigma$ contained in the cone $\Gamma.$ This can be done as follows. The axis of the cone is a ray emanating from the origin contained in $O_{\sigma}$ for some $\sigma.$ Taking $\theta =\theta(d)< \pi$ close enough to $\pi$ one can achieve $O_\sigma \subset \Gamma.$ Indeed, something stronger can be obtained: there exists a sufficiently small angle $\gamma=\gamma(d)$ such that expanding $O_{\sigma}$  in all directions by at most $\gamma$ degrees one still remains in the cone $\Gamma.$

The planar argument now works with $\theta$ in place of $120^\circ.$

One also needs to have enough linear isometries to transport one region $O_\sigma$ into another $O_{\sigma'}.$ Consider the following kinds of linear isometries. Fix a variable $x_i$ and take the mapping that leaves the other variables invariant and changes the sign to the $x_i$ variable. Given two variables $x_i$ and $x_j$ with $i\neq j$ consider the mapping that leaves the other variables invariant and interchanges $x_i$ and $x_j.$ Finally take the mapping $x \rightarrow -x.$ Let $\mathcal{S}$ the set of such linear isometries. One can easily check that given two regions $O_\sigma$ and $O_{\sigma'}$ one can map one into the other by composing finitely many isometries in $\mathcal{S}.$

All these elements lead to a stopping time that proves Theorem \ref{pvthm2}.

\bigskip

\noindent
\textbf{Acknowledgements.} The authors are grateful to X. Tolsa for various conversations on the subject and to the referee for many suggestions that have improved considerably the exposition.
Pertti Mattila is grateful for the invitation to visit the Department of Mathematics at Universitat Aut\`onoma de Barcelona.
J. Verdera acknowledges support from the grants 2021-SGR-00071(Generalitat de Catalunya), PID2020-112881GB-I00  and Severo Ochoa and Maria de Maeztu CEX2020-001084-M (Ministerio de Ciencia e Innovaci\'on). 
\medskip

\noindent
{\bf Statements and declarations.} The authors have no conflict of interests to declare.\medskip

\noindent
{\bf Data availability statement.} Data sharing not applicable to this article as no datasets were generated or analysed during the current study.

\vspace{1cm}

\begin{footnotesize}

{\sc Departament de Matem\`atiques,
Universitat Aut\`onoma de Barcelona, Catalonia,}\\
\emph{E-mail address:} 
\verb"julia.cufi@uab.cat" 

{\sc Departament de Matem\`atiques,
Universitat Aut\`onoma de Barcelona, Catalonia,}\\
\emph{E-mail address:} 
\verb"juan.jesus.donaire@uab.cat" 

{\sc Department of Mathematics and Statistics,
P.O. Box 68,  FI-00014 University of Helsinki, Finland,}\\
\emph{E-mail address:} 
\verb"pertti.mattila@helsinki.fi" 

{\sc Departament de Matem\`atiques,
Universitat Aut\`onoma de Barcelona and Centre de Recerca matem\`atica, Catalonia,}\\
\emph{E-mail address:} 
\verb"joan.verdera@uab.cat"

\end{footnotesize}


\begin{thebibliography}{CPV}

\bibitem[CPV]{CPV} J. Cuf\'i, A. Ponce and J. Verdera. The precise representative for the gradient of the Riesz potential of a finite measure,  {\em J. London Math. Soc. (2)} {\bf 106} (2022), 1603--1627.

\bibitem[DLN]{DLN} J.J. Donaire, J. G. Llorente and A. Nicolau, Differentiability of functions in the Zygmund class, {\em Proc. London Math. Soc. (3)} {\bf 108} (2014), 133--158.

\bibitem[H]{H} G. Hungerford,  Boundaries of smooth sets and singular sets of Blaschke products in the little Bloch class. Thesis. California Institute of Technology (1988).

\bibitem[MT]{MT} J. Mateu and X. Tolsa,  Riesz transforms and harmonic Lip1-capacity in Cantor sets, {\em Proc. London Math.Soc.} {\bf 3} (2004), 676--696.

\bibitem[M]{M} P. Mattila. {\em Geometry of Sets and Measures in Euclidean Spaces}, Cambridge University Press, Cambridge, 1995.

\bibitem[MV]{MV} P. Mattila and J. Verdera, Convergence of singular integrals with general measures, {\em J. Eur. Math. Soc. (JEMS)} {\bf 11} (2009), 257--271.

\bibitem[T]{T}  X. Tolsa. {\em Analytic capacity, the Cauchy transform, and non-homogeneous Calder\'on-Zygmund theory},  Volume 307 of Progress in Mathematics, Birkh\"auser Verlag, Basel, 2014.

\bibitem[S]{S} A. N. Shiryaev. {\em Probability},  second edition, Graduate texts in Mathematics 95,  Springer-Verlag, Berlin and Heidelberg, 1996.



	
\end{thebibliography}
\end{document}